\theoremstyle{plain}
\newtheorem{Thm}{Theorem}
\newtheorem{Coro}[Thm]{Corollary}
\newtheorem{Lem}[Thm]{Lemma}
\theoremstyle{definition}
\newtheorem{Def}[Thm]{Definition}
\begin{document}

\title[Mapping class groups of Heegaard splittings]{Mapping class groups of medium distance Heegaard splittings}

\author{Jesse Johnson}
\address{\hskip-\parindent
        Department of Mathematics \\
        Oklahoma State University \\
        Stillwater, OK 74078
        USA}
\email{jjohnson@math.okstate.edu}

\subjclass{Primary 57M}
\keywords{Heegaard splitting, mapping class group, curve complex}

\thanks{Research supported by NSF MSPRF grant 0602368}

\begin{abstract}
We show that if the Hempel distance of a Heegaard splitting is larger than three then the mapping class group of the Heegaard splitting is isomorphic to a subgroup of the mapping class group of the ambient 3-manifold.  This implies that given two handlebody sets in the curve complex for a surface that are distance at least four apart, the group of automorphisms of the curve complex that preserve both handlebody sets is finite.
\end{abstract}

\maketitle

\section{Introduction}

A \textit{Heegaard splitting} for a compact, connected, closed, orientable 3-manifold $M$ is a triple $(\Sigma, H^-, H^+)$ where $\Sigma$ is a compact, separating surface in $M$ and $H^-$, $H^+$ are handlebodies in $M$ such that $M = H^- \cup H^+$ and $\partial H^- = \Sigma = H^- \cap H^+ = \partial H^+$.  The \textit{automorphism group} $Aut(M, \Sigma)$ of the Heegaard splitting is the set of automorphisms of $M$ that take $\Sigma$ onto itself.  The \textit{mapping class group} $Mod(M,\Sigma)$ of the Heegaard splitting is the group of connected components of $Aut(M, \Sigma)$.  Because every automorphism of $(M, \Sigma)$ is an automorphism of $M$, there is a canonical map $i : Mod(M, \Sigma) \rightarrow Mod(M)$.

The \textit{curve complex} $C(\Sigma)$ is the simplicial complex whose vertices are isotopy classes of essential simple closed curves in $\Sigma$, with edges connecting disjoint loops.  
The \textit{Hempel distance} $d(\Sigma)$ of $\Sigma$ is the distance in the curve complex from the set of loops bounding disks in one handlebody to the set of loops bounding disks in the other.  We prove the following:

\begin{Thm}
\label{mainthm}
Let $(\Sigma, H^-, H^+)$ be a genus $k > 1$ Heegaard splitting of a 3-manifold $M$.  If $d(\Sigma) > 3$ then $Mod(M, \Sigma)$ is finite and the induced homomorphism $i$ is an injection. If $d(\Sigma) > 2k$ then $i$ is an isomorphism.
\end{Thm}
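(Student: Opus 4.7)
My plan is to factor the analysis through a restriction map $r: Mod(M,\Sigma) \to Mod(\Sigma)$ sending $[f] \mapsto [f|_\Sigma]$. I would first check that $r$ is injective: by Waldhausen's theorem on homeomorphisms of handlebodies, a representative $f$ that restricts to the identity on $\Sigma$ is isotopic to the identity on each $H^\pm$ through maps fixing $\Sigma$ pointwise, hence is trivial in $Mod(M,\Sigma)$. Since each $f \in Aut(M,\Sigma)$ either preserves or swaps $H^\pm$, the image of $r$ lies in the subgroup $G \leq Mod(\Sigma)$ that preserves the unordered pair $\{\mathcal{D}^-,\mathcal{D}^+\}$ of handlebody disk sets in $C(\Sigma)$.

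The main technical step is to prove $G$ is finite when $d(\Sigma) > 3$; combined with injectivity of $r$ this yields finiteness of $Mod(M,\Sigma)$. Since $Mod(\Sigma)$ is virtually torsion-free every torsion subgroup is finite, so it suffices to show each element of $G$ has finite order. By Nielsen--Thurston, an infinite-order $\phi \in G$ is either pseudo-Anosov or reducible with a pseudo-Anosov piece on some invariant subsurface. By Masur--Minsky such $\phi$ acts with positive translation length along a quasi-axis in $C(\Sigma)$ (or in the curve complex of a subsurface). Iterating $\phi$ on a vertex of $\mathcal{D}^-$ and using that nearest-point projection onto the axis is coarsely $\phi$-equivariant, one can force $\mathcal{D}^-$ to fellow-travel the axis; the same argument applies to $\mathcal{D}^+$. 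A careful distance estimate then produces a disk in $\mathcal{D}^-$ and a disk in $\mathcal{D}^+$ at bounded distance, contradicting $d(\Sigma) > 3$. I expect this coarse-geometric step to be the main obstacle, since it requires delicate control over how $\phi$-invariance of the disk sets interacts with subsurface projections and axis-tracking.

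For injectivity of $i$ when $d(\Sigma) > 3$: since $r$ is injective, it suffices to show $\ker(i) \subseteq \ker(r)$, i.e.\ if $f \in Aut(M,\Sigma)$ is isotopic to the identity in $M$ then $f|_\Sigma$ is isotopic to the identity on $\Sigma$. If $f|_\Sigma$ were nontrivial, some essential simple closed curve $c \subset \Sigma$ would be sent to a curve freely homotopic to $c$ in $M$ but not isotopic on $\Sigma$. This configuration produces an essential annulus in $M$ with both boundary components on $\Sigma$, which is obstructed for $d(\Sigma) \geq 3$ by a Scharlemann-style bound on essential surfaces relative to a high-distance Heegaard surface. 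Hence $r(\phi)$ is trivial, and by injectivity of $r$ so is $\phi$.

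Finally, surjectivity of $i$ when $d(\Sigma) > 2k$ follows from Scharlemann--Tomova's uniqueness theorem: a genus-$k$ Heegaard splitting with Hempel distance greater than $2k$ is the unique Heegaard surface of genus at most $k$ in $M$ up to isotopy. Any $\psi \in Mod(M)$ therefore maps $\Sigma$ to an isotopic Heegaard surface, and composing with an ambient isotopy produces a representative in $Aut(M,\Sigma)$ whose class in $Mod(M,\Sigma)$ maps to $\psi$ under $i$. Since $2k \geq 4 > 3$, injectivity of $i$ is already established, so $i$ is an isomorphism in this range.
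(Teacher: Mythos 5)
Your proposal takes a genuinely different route from the paper, but the two steps that carry all the content are not actually established, and one of them cannot give the sharp bound.

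Your coarse-geometric step---showing that the stabilizer $G \leq Mod(\Sigma)$ of the unordered pair $\{\mathcal{D}^-,\mathcal{D}^+\}$ is finite when $d(\Sigma)>3$ via Nielsen--Thurston and Masur--Minsky translation lengths---is precisely Namazi's approach, and it is the reason his bound is an unspecified genus-dependent constant $C_k$ rather than $3$. The quasiconvexity constants for the disk sets and the hierarchy machinery do not come with explicit small numbers, and a pseudo-Anosov with positive translation length can perfectly well stabilize an unbounded quasiconvex set such as a disk set, so one needs much more care than ``fellow-traveling the axis.'' You flag this as the main obstacle; it is an obstacle the paper avoids entirely. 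The paper never shows $G$ finite directly: it shows $i$ is injective first, then invokes Hempel/Thompson plus geometrization to get $Mod(M)$ finite (hyperbolicity for $d(\Sigma)>2$), so that $Mod(M,\Sigma)$ is finite because it injects into a finite group.

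The second gap is more serious. You reduce injectivity of $i$ to $\ker(i)\subseteq\ker(r)$ and propose to obstruct a nontrivial $f|_\Sigma$ with $f \simeq \mathrm{id}_M$ by producing an essential annulus from the free homotopy in $M$ between $c$ and $f(c)$. This breaks down for the curves that matter most: any $c$ bounding a disk in $H^-$ or $H^+$ is null-homotopic in $M$, so the free homotopy in $M$ carries no information about $c$ versus $f(c)$. Even for homotopically essential $c$, a singular annulus traced by the ambient isotopy need not be isotoped to an embedded essential annulus, and in a closed manifold there is no off-the-shelf ``Scharlemann-style'' distance bound excluding essential annuli with boundary on a Heegaard surface (Hempel's $d\geq 3$ result controls tori, not such annuli). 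In fact the statement $\ker(i)\subseteq\ker(r)$ \emph{is} the whole theorem modulo easy reductions, and the paper's proof of it is the substantive content: an isotopy realizing $\phi\in\ker(i)$ is converted to a one-parameter family of sweep-outs $g_r$ of the fixed sweep-out $f$; if every $g_r$ spans $f$ one builds a continuously varying projection $g^{-1}(0)\to f^{-1}(0)$ showing $\phi|_\Sigma$ is isotopically trivial, while a transition from spanning to splitting forces, via the Rubinstein--Scharlemann graphic at the transition moment, a pair of disjoint curves each distance one from a disk on opposite sides, giving $d(\Sigma)\leq 3$. That dichotomy is where the constant $3$ actually comes from, and nothing in your outline replaces it. Your use of Scharlemann--Tomova for surjectivity when $d(\Sigma)>2k$ matches the paper and is fine.
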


This theorem improves a result of Namazi~\cite{nam:mcg} showing that for some constant $C_k$ depending on the genus $k$ of $\Sigma$, if $d(\Sigma) > C_k$ then $Mod(M,\Sigma)$ is finite.  The author and Hyam Rubinstein~\cite{finitemcg} showed that this implies the homomorphism $i$ is an isomorphism.  We also showed that a Heegaard splitting with Hempel distance greater than four cannot have an infinite order, reducible automorphism.  

Examples of distance two Heegaard splittings admitting infinite order reducible automorphisms are known to exist; we noted in~\cite{finitemcg} that Heegaard splittings induced by open book decompositions have such automorphisms, many of which have distance two.  Long~\cite{long:twoh} found strongly irreducible Heegaard splittings that admit pseudo-Anosov automorphism.  These latter examples have distance at least two, though to the author's knowledge, the exact distance of Long's example is not known.  Thus it remains an open problem whether there is a Heegaard splitting with Hempel distance three that admits an infinite order automorphism.

The main thrust of this paper will be to show that for distance greater than three, the homomorphism $i$ is injective.  Hempel~\cite{hempel} and Thompson~\cite{thompson} showed that any 3-manifold admitting a Heegaard splitting of genus at least three is neither toroidal nor Seifert fibered, so the geometrization theorem implies that every such 3-manifold is hyperbolic, and thus has a finite mapping class group.  We thus conclude that every Heegaard splitting with Hempel distance at least 4 has a finite mapping class group.  This implies the following Corollary for the curve complex:

\begin{Coro}
\label{curvecomplexcoro}
If $\mathcal{H}$ and $\mathcal{H}'$ are handlebody sets in the curve complex $C(\Sigma)$ and $d(\mathcal{H}, \mathcal{H}') > 3$ then the group of automorphisms of $C(\Sigma)$ that take each of $\mathcal{H}$, $\mathcal{H}'$ onto itself is finite.
\end{Coro}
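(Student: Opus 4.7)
The plan is to reduce the statement to the Main Theorem by invoking Ivanov's theorem that every automorphism of the curve complex is induced by a surface homeomorphism.

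First, since $\mathcal{H}$ and $\mathcal{H}'$ are handlebody sets, I would choose handlebodies $H^-$ and $H^+$ with $\partial H^{\pm} = \Sigma$ whose disk sets realize $\mathcal{H}$ and $\mathcal{H}'$, respectively, and form $M = H^- \cup_\Sigma H^+$. Then $(\Sigma, H^-, H^+)$ is a Heegaard splitting of $M$ with Hempel distance $d(\Sigma) = d(\mathcal{H}, \mathcal{H}') > 3$, so the Main Theorem applies to it.

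Next, let $G$ denote the group of automorphisms of $C(\Sigma)$ fixing each of $\mathcal{H}$, $\mathcal{H}'$ setwise. By the theorem of Ivanov (extended to low genus by Korkmaz and Luo), every automorphism of $C(\Sigma)$ is induced by a homeomorphism of $\Sigma$, yielding an isomorphism $Aut(C(\Sigma)) \cong Mod^{\pm}(\Sigma)$. Each element of $G$ therefore corresponds to an isotopy class $[f] \in Mod^{\pm}(\Sigma)$ that preserves the disk sets of both $H^-$ and $H^+$. The classical fact that a homeomorphism of $\partial H$ extends across the handlebody $H$ if and only if it preserves the disk set of $H$ then implies $[f]$ extends over each of $H^-$ and $H^+$; gluing these extensions produces a (possibly orientation-reversing) automorphism of $(M, \Sigma)$. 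This construction gives an injective homomorphism $G \hookrightarrow Mod^{\pm}(M, \Sigma)$, the extended mapping class group of the Heegaard splitting, injectivity following from the fact that the surface-level data already distinguishes elements of $G$.

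Finally, the Main Theorem gives that $Mod(M, \Sigma)$ is finite whenever $d(\Sigma) > 3$. Since $Mod^{\pm}(M, \Sigma)$ contains $Mod(M, \Sigma)$ as a subgroup of index at most $2$, it is also finite, and hence so is $G$. The main obstacle I expect is the extension step: one must invoke that a surface homeomorphism extends across a handlebody exactly when it preserves the disk set, and also that this extension is unique up to isotopy (so that elements of $Mod^{\pm}(\Sigma)$ yield well-defined elements of $Mod^{\pm}(M,\Sigma)$). With Ivanov's theorem and the classical extension theorem in hand, the remaining bookkeeping between orientation-preserving and extended mapping class groups is routine.
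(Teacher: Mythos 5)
Your proposal is correct and takes essentially the same route as the paper: invoke Ivanov/Korkmaz/Luo to realize automorphisms of $C(\Sigma)$ as surface mapping classes, extend across both handlebodies using preservation of disk sets, and then apply the Main Theorem to conclude that $Mod(M,\Sigma)$ (and hence the automorphism group) is finite. The only additions beyond the paper's argument are your explicit remarks on injectivity and on $Mod^{\pm}$ versus $Mod$; these are reasonable points of care but do not alter the argument, and in fact for the finiteness conclusion one only needs an injective set map into a finite group, so the uniqueness-of-extension concern you flag is not essential.
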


The proof of Theorem~\ref{mainthm} is based on a refinement of sweep-out based techniques introduced by the author in~\cite{me:stabs}, with a better distance bound due to the fact that we are comparing a Heegaard splitting to itself.  Section~\ref{sweepsect} introduces sweep-outs and graphics.  The ``spanning'' and ``splitting'' conditions introduced in~\cite{me:stabs} and the ways these two conditions change during an isotopy of the sweep-out are discussed in Section~\ref{facingsect}.  (Any element of the kernel of $i$ implies such an isotopy.)  We show in Section~\ref{projectsect} that if the graphic remains spanning throughout the isotopy then the induced element of the kernel is trivial.  We show in section~\ref{curvesect} that if the graphic changes from spanning to splitting during the isotopy then the distance of the Heegaard splitting is at most three.  We then combine these results in Section~\ref{mainthmsect} to prove Theorem~\ref{mainthm} and Corollary~\ref{curvecomplexcoro}.  
I want to thank Yoav Moriah for many helpful discussions.

\section{Sweep-outs and graphics}
\label{sweepsect}

A \textit{handlebody} is a connected 3-manifold that is homeomorphic to a regular neighborhood of a graph $K$ embedded in $S^3$.  The graph $K$ is called a \textit{spine} for $H$.

A \textit{sweep-out} is a smooth function $f : M \rightarrow [-1,1]$ such that for each $x \in (-1,1)$, the level set $f^{-1}(x)$ is a closed surface.  Moreover, $f^{-1}(-1)$ must be the union of a graph and some number of boundary components while $f^{-1}(1)$ is the union of a second graph and the remaining boundary components.  Each of $f^{-1}(-1)$ and $f^{-1}(1)$ is called a \textit{spine} of the sweep-out.  Each level surface of $f$ is a Heegaard surface for $M$ and the spines of the sweep-outs are spines of the two handlebodies in the Heegaard splitting.  

Conversely, given a Heegaard splitting $(\Sigma, H^-, H^+)$ for $M$, there is a sweep-out for $M$ such that each level surface is isotopic to $\Sigma$.  We will say that a sweep-out \textit{represents} $(\Sigma, H^-, H^+)$ if $f^{-1}(-1)$ is isotopic to a spine for $H^-$ and $f^{-1}(1)$ is isotopic to a spine for $H^+$.  The level surfaces of such a sweep-out will be isotopic to $\Sigma$.  Because the complement $H \setminus K$ is a surface cross an interval, we can construct a sweep-out for any Heegaard splitting, i.e. we have the following:

\begin{Lem}
Every Heegaard splitting of a compact, connected, closed orientable, smooth 3-manifold is represented by a sweep-out.
\end{Lem}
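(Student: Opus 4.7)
Given a Heegaard splitting $(\Sigma, H^-, H^+)$, I would build the required function $f : M \to [-1, 1]$ by constructing it separately on each handlebody and gluing along $\Sigma$. Choose a spine $K^\pm \subset H^\pm$, realized as a smoothly embedded graph, and invoke the standard fact (alluded to just before the lemma) that the complement of a spine has a product structure $H^\pm \setminus K^\pm \cong \Sigma \times [0, 1)$ with $\Sigma \times \{0\}$ identified with $\partial H^\pm$. Fix such product structures on both sides whose restrictions to $\Sigma$ agree, giving a single product structure $M \setminus (K^- \cup K^+) \cong \Sigma \times (-1, 1)$.

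Let $f_0 : M \setminus (K^- \cup K^+) \to (-1, 1)$ be the projection to the second coordinate, and extend by setting $f_0(K^\pm) = \pm 1$. Then $f_0$ is continuous, every level set $f_0^{-1}(t)$ for $t \in (-1, 1)$ is a closed surface isotopic to $\Sigma$, and by construction $f_0^{-1}(-1) = K^-$ is a spine of $H^-$ while $f_0^{-1}(1) = K^+$ is a spine of $H^+$; thus $f_0$ meets the topological requirements and represents the splitting.

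The remaining task is to replace $f_0$ by a smooth $f$ with the same level set behavior. Away from $K^\pm$ this reduces to choosing smooth product structures on $H^\pm \setminus K^\pm$, and at $\Sigma$ one reparameterizes the collar coordinate on each side so that the two halves match to all orders. The substantive step is smoothness at the spines. For this I would use local models on a regular neighborhood $N(K^\pm)$: on a tubular neighborhood of a smooth edge, parameterize as $D^2 \times I$ with the edge as $\{0\} \times I$ and take $f = \pm(1 - c\|x\|^2)$ in the radial disk coordinate $x$; at a vertex of degree $n$, pick a smooth function on the $3$-ball vertex chart attaining its extremum at the vertex, with nearby level sets $n$-punctured spheres that match the tubular model on the $n$ outgoing edge pieces. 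These local models are then glued into a global smooth function by a partition of unity on $N(K^\pm)$ and matched to the collar coordinate across $\partial N(K^\pm)$. The main obstacle is the vertex model and its compatibility with the edge and collar models; once this is in place the construction is routine, and contractibility of the space of admissible local functions with prescribed boundary behavior guarantees that the patching can be carried out.
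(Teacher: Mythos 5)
Your proposal is correct and takes essentially the same approach the paper does: the paper's entire justification is the one-line observation immediately preceding the lemma, that $H \setminus K$ is a surface cross an interval, so the sweep-out is just the interval coordinate of the two glued product structures. You go further than the paper by filling in the smoothness details near the spines, which the paper leaves implicit.
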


Given two sweep-outs, $f$ and $g$, their product is a smooth function $f \times g : M \rightarrow [-1,1] \times [-1,1]$.  (That is, we define $(f \times g)(x) = (f(x),g(x))$.)  The \textit{discriminant set} for $f \times g$ is the set of points where the level sets of the two functions are tangent.  Kobayashi~\cite{Kob:disc} has shown that after an isotopy of $f$ and $g$, we can assume that $f \times g$ is a stable function on the complement of the four spines.  This implies that the discriminant set will be a one dimensional smooth submanifold in the complement in $M$ of the spines.

The function $f \times g$ sends the discriminant to a graph in $[-1,1] \times [-1,1]$ called the \textit{Rubinstein-Scharlemann graphic} (or just the \textit{graphic} for short).  Kobayashi's approach uses singularity theory to recover the machinery originally constructed by Rubinstein and Scharlemann in ~\cite{rub:compar} using Cerf theory.

\begin{Def}
The function $f \times g$ is \textit{generic} if $f \times g$ is stable and each arc $\{t\} \times [-1,1]$ or $[-1,1] \times \{s\}$ contains at most one vertex of the graphic.
\end{Def}

\section{Spanning and Splitting}
\label{facingsect}

Let $f$ and $g$ be sweep-outs representing the same Heegaard splitting $(\Sigma, H^-, H^+)$.  For each $s \in (-1,1)$, define $\Sigma'_s = g^{-1}(s)$, $H'^-_s = g^{-1}([-1,s])$ and $H'^+_s = g^{-1}([s,1])$.  Similarly, for $t \in (-1,1)$, define $\Sigma_t = f^{-1}(t)$.  We will say that $\Sigma_t$ is \textit{mostly above} $\Sigma'_s$ if each component of $\Sigma_t \cap H'^-_s$ is contained in a disk subset of $\Sigma_t$.  Similarly, $\Sigma_t$ is \textit{mostly below} $g_t$ if each component of $\Sigma_t \cap H'^+_s$ is contained in a disk in $\Sigma_t$.  See~\cite{me:stabs} for a more detailed discussion of these definitions.

Let $R_a \subset (-1,1) \times (-1,1)$ be the set of all values $(t,s)$ such that $\Sigma_t$ is mostly above $\Sigma'_s$.  Let $R_b \subset (-1,1) \times (-1,1)$ be the set of all values $(t,s)$ such that $\Sigma_t$ is mostly below $\Sigma'_s$.  For a fixed $t$, there will be values $a,b$ such that $\Sigma_t$ will be mostly above $\Sigma'_s$ if and only if $s \in [-1,a)$ and mostly above $\Sigma'_s$ if and only if $s \in [b,1]$, so both regions will be vertically convex.

\begin{Def}
\label{ffdef}
Given a generic pair $f$, $g$, we will say $g$ \textit{spans} $f$ if there is a horizontal arc $\{s\} \times [-1,1]$ that intersects the interiors of both regions $R_a$ and $R_b$.  Otherwise, we will say that $g$ \textit{splits} $f$.
\end{Def}

We noted in~\cite{me:stabs} the following:

\begin{Lem}
\label{regionsboundedlem}
The closure of $R_a$ in $(-1,1) \times (-1,1)$ is bounded by arcs of the Rubinstein-Scharlemann graphic, as is the closure of $R_b$.  The closures of $R_a$ and $R_b$ are disjoint.
\end{Lem}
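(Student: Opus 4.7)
My plan is to handle the two claims separately.

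For the boundary claim, I would use topological continuity. In the complement of the graphic, $f\times g$ is stable and no tangencies between $\Sigma_t$ and $\Sigma'_s$ arise, so as $(t,s)$ varies in a connected component of that complement the intersection pattern $\Sigma_t\cap\Sigma'_s$ inside $\Sigma_t$ evolves only by ambient isotopy. Whether each component of $\Sigma_t\cap H'^\pm_s$ is contained in a disk in $\Sigma_t$ is preserved under isotopy, so the conditions defining $R_a$ and $R_b$ are locally constant on the complement of the graphic. Thus $R_a$ and $R_b$ are each a union of components of that complement, and their boundaries lie on graphic arcs.

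For the disjointness of the closures, I would first show the open regions are themselves disjoint. Suppose $(t,s)\in R_a\cap R_b$. If $\Sigma_t\cap\Sigma'_s=\emptyset$, then $\Sigma_t$ is a single component sitting in one of $H'^\pm_s$, which would need to be contained in a disk in itself — impossible since $k>1$. Otherwise, any intersection curve $\gamma$ lies on the boundary of some component $C^-$ (contained in a disk $D^-\subset\Sigma_t$ by mostly above) and some component $C^+$ (contained in a disk $D^+\subset\Sigma_t$ by mostly below). Viewed inside $D^\pm$, the curve $\gamma$ bounds a sub-disk, giving two disks in $\Sigma_t$ with boundary $\gamma$. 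If they lie on opposite sides of $\gamma$ in $\Sigma_t$, then $\gamma$ bounds a disk on each side, forcing $\Sigma_t$ to be a sphere and contradicting $k>1$. Otherwise the two sub-disks coincide, so each $\gamma$ is nullhomotopic in $\Sigma_t$; iteratively capping off innermost intersection curves then rebuilds $\Sigma_t$ from planar pieces glued along nullhomotopic curves, again producing a sphere and a contradiction.

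To promote disjointness of the open regions to disjointness of the closures, I would rule out any shared graphic arc. A generic point on a graphic arc corresponds to a single tangency between $\Sigma_{t_0}$ and $\Sigma'_{s_0}$, and crossing the arc corresponds to a single local modification of the intersection pattern: either creating or destroying a small intersection disk (center tangency) or performing a band move between two curves (saddle tangency). A case analysis of these local surgeries shows that a single tangency crossing can flip mostly-above on or off, and mostly-below on or off, but cannot flip one on and the other off simultaneously — at least one of the two disk-containment conditions is preserved across any single local move. Consequently no graphic arc can have $R_a$ on one side and $R_b$ on the other, giving $\overline{R_a}\cap\overline{R_b}=\emptyset$.

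The main obstacle I anticipate is the open-level disjointness: the innermost-disk and nullhomotopy argument requires careful bookkeeping of which side of each intersection curve the various sub-disks sit on, and pushing through the degenerate case in which a sub-disk of $D^+$ happens to coincide with an innermost $H^-$-component is the delicate step. The closure-level statement then reduces to a relatively clean local model analysis at each graphic point.
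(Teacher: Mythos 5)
The paper itself does not prove this lemma --- it is quoted from \cite{me:stabs} --- so there is no in-paper argument to compare against directly; I am assessing your outline on its own terms. Your first two pieces are sound. Local constancy of ``mostly above/below'' on the complement of the graphic is the standard stability argument and gives that $R_a$ and $R_b$ are open unions of components of that complement, bounded by graphic arcs. Your open-region disjointness argument also works, though the phrase ``producing a sphere'' is doing some hidden work: the real point is that if $\gamma$ is innermost and bounds the disk component $E=D_\gamma$, and $C$ is the piece on the other side with $C\subset D_C$, then $E\cup C\subset D_C$; so merging preserves disk-containment, and repeating forces all of $\Sigma_t$ into a disk, contradicting $k>1$.

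The genuine gap is in the closure step, which you describe as ``a relatively clean local model analysis at each graphic point.'' But disk-containment in $\Sigma_t$ is not a local property of the tangency: whether the band move produces a piece that lies in a disk depends on the global embedding, not on the local Morse picture, so a case analysis of local surgery types cannot by itself decide the question. The work that is being waved away is the actual content of the claim. One needs first the global observation that whenever $\Sigma_t$ is mostly above (or below) $\Sigma'_s$, every curve of $\Sigma_t\cap\Sigma'_s$ is null-homotopic, hence there is a unique complementary region $C_0$ not contained in a disk, and it carries all the genus. For a saddle to carry $(t_0,s)$ from $R_a$ (just below $s_0$) to $R_b$ (just above), $C_0$ must switch sides, which forces the band $B$ to lie in $C_0$ with its long sides on $\partial C_0$. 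If $B$ joins two distinct boundary circles $\gamma_1,\gamma_2$, the new minus-side piece sits inside the embedded disk $D_{\gamma_1}\cup B\cup D_{\gamma_2}$, so mostly-above survives the crossing --- contradiction. If both ends of $B$ lie on a single $\gamma_1$, then $D_{\gamma_1}\cup B$ is an annulus: if inessential the same contradiction applies, and if essential then $C_0\setminus B$ (which is $\Sigma_t$ minus that annulus and some disjoint disks) still has positive genus, so mostly-below is not achieved. Without this global reasoning --- the uniqueness of $C_0$ and the disk/annulus dichotomy for the band --- the assertion ``a case analysis shows the two conditions cannot flip simultaneously'' is unsubstantiated, and it is precisely the half of the lemma that is not immediate.
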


We will rely on the following Lemma, which follows almost immediately from the definition of spanning.  The details are left to the reader:

\begin{Lem}
\label{withitselflem}
If a level surface of $g$ is disjoint from and separates the spines of $f$ then $g$ spans $f$.
\end{Lem}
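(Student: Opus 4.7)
The plan is to show that the horizontal arc at height $s_0$, where $s_0$ is the level value of the given separating surface, meets both regions $R_a$ and $R_b$. First I would fix $\Sigma'_{s_0}=g^{-1}(s_0)$ disjoint from and separating the two spines $K^{\pm}=f^{-1}(\pm 1)$ of $f$, and, after relabelling if necessary, assume $K^-\subset H'^-_{s_0}$ and $K^+\subset H'^+_{s_0}$.

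Next I would use the fact that the level surfaces $\Sigma_t=f^{-1}(t)$ collapse onto the spine $K^-$ as $t\to -1$. Since $K^-$ is a compact 1-complex disjoint from the closed surface $\Sigma'_{s_0}$, it admits a regular neighborhood contained entirely in the open region $H'^-_{s_0}$, and for $t$ sufficiently close to $-1$ the whole of $\Sigma_t$ is contained in that neighborhood. In particular $\Sigma_t\cap H'^+_{s_0}=\emptyset$, so every component of this intersection vacuously lies in a disk subset of $\Sigma_t$; by definition this puts $(t,s_0)$ in $R_b$. The symmetric argument applied to $K^+$, for $t$ close to $+1$, yields $(t,s_0)\in R_a$.

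To finish I would note that each of the conditions $\Sigma_t\subset H'^-_{s_0}$ and $\Sigma_t\subset H'^+_{s_0}$ is an open condition on $(t,s)$ near $s_0$, so the two points obtained lie in the interiors of $R_b$ and $R_a$ respectively. The horizontal arc at height $s_0$ therefore meets the interior of each region, and Definition~\ref{ffdef} then gives that $g$ spans $f$.

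There is no real obstacle here; the only delicate point is keeping the \emph{above/below} versus \emph{mostly above/below} conventions straight. The heart of the argument is just the observation that a level surface $\Sigma_t$ lying entirely on one side of $\Sigma'_{s_0}$ satisfies the ``mostly'' condition for the opposite side vacuously, so the separation hypothesis on $\Sigma'_{s_0}$ is exactly what is needed to reach both regions along a single horizontal line.
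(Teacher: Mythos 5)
The paper leaves the proof of this lemma to the reader, stating only that it follows almost immediately from the definition of spanning; your argument correctly supplies exactly those details. The two key observations --- that for $t$ near $\pm 1$ the level surface $\Sigma_t$ lies strictly on one side of $\Sigma'_{s_0}$ and therefore vacuously satisfies the opposite-side ``mostly above/below'' condition, and that strict containment is an open condition in $(t,s)$, putting the resulting points in the interiors of $R_a$ and $R_b$ --- are precisely what Definition~\ref{ffdef} requires.
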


The \textit{ambiently trivial subgroup} $Amt(M, \Sigma)$ is the kernel of the induced homomorphism $i : Mod(M, \Sigma) \rightarrow Mod(M)$ defined in the introduction.  An element $\phi \in Amt(M, \Sigma)$ is a homeomorphism from $M$ to itself that is isotopy trivial on $M$ but restricts to a non-trivial self-homeomorphism of $\Sigma$.  Let $\{\Phi_r : M \rightarrow M | r \in [0,1]\}$ be an isotopy such that $\Phi_0$ is the identity and $\Phi_1 = \phi$.

Let $f$ be a sweep-out representing $(\Sigma, H^-, H^+)$ and let $g$ be the result of perturbing $f$ slightly so that $f \times g$ is generic.  Because every level set of $f$ separates the spines of $f$, we can choose $g$ so that some level set $\Sigma'_s$ of $g$ separates the spines of $f$.  Thus by Lemma~\ref{withitselflem}, the sweep-out $g$ will span $f$.

Because $g$ is isotopic to $f$ and $f$ represents $(\Sigma, H^-, H^+)$, the level surface $\Sigma'_s$ is isotopic to $\Sigma$ and this isotopy extends to an ambient isotopy of $M$ sending $\Sigma'_s$ onto $\Sigma$.  Conjugate both $f$ and $g$ by this ambient isotopy so that $f \times g$ is generic, $g$ spans $f$ and $\Sigma'_s = \Sigma$ separates the spines of $f$.

Because $\phi(\Sigma) = \Sigma$, the sweep-out $g \circ \phi$ will also span $f$.  Define the family of sweep-outs $g_r = g \circ \Phi_r$.  The following Lemma is proved in~\cite{me:stabs}:

\begin{Lem}[Lemma 25 in~\cite{me:stabs}]
\label{isotopesweepslem}
We can choose the isotopy $\{\Phi_r\}$ so that the graphic defined by $f$ and $g_r$ is generic for all but finitely many points.  At the finitely many non-generic points, there are at most two valence two or four vertices at the same level, or one valence six vertex.
\end{Lem}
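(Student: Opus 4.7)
The plan is to view the family $r \mapsto f \times g_r$ as a path in the space of smooth maps $M \to [-1,1]^2$ (away from the four spines) and put this path in general position with respect to the stratification whose top, open, dense stratum consists of generic maps in the sense of Definition~\ref{ffdef}. Kobayashi's theorem~\cite{Kob:disc} already provides stability of $f \times g_r$ at each individual $r$ after a small perturbation; the content of the lemma is a one-parameter version of that result, together with control over the moments at which two graphic vertices line up horizontally or vertically.

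There are two qualitatively distinct ways in which $(f, g_r)$ can fail to be generic, each codimension one in the space of one-parameter families. First, $f \times g_r$ can fail to be a stable map at a single moment, and by the standard list of codimension-one multi-germs of maps from a 3-manifold to a surface (equivalently, the Cerf-theoretic transitions used in~\cite{rub:compar}), the generic such failure appears in the graphic as a single new vertex of valence six, for instance three fold arcs passing through a common image point. Second, $f \times g_r$ can remain stable while two existing vertices of the graphic (each of valence two or four) happen to acquire the same horizontal or vertical coordinate. All other degenerations --- three vertices sharing a coordinate, a valence-six event together with a vertex coincidence, or two simultaneous instabilities --- are cut out by codimension-two conditions and will be avoided by a generic path.

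To make this rigorous I would apply the one-parameter form of Thom's multi-jet transversality theorem to the map that sends an isotopy $\{\Phi_r\}$ rel $r = 0, 1$ to the induced one-parameter family of multi-jets of $f \times g_r$ at finite tuples of points of $M$. The key technical point is that compositions $g \circ \Phi_r$ with isotopies $\Phi_r$ fixed at the endpoints act with enough freedom on the multi-jets of $g_r$ at any finite configuration of points off the spines that the resulting evaluation map is a submersion onto the relevant jet space; this follows by a standard partition-of-unity construction on $M$. Residually among such isotopies, the induced path $r \mapsto f \times g_r$ is then transverse to every codimension-one stratum and disjoint from every codimension-two stratum, and this is precisely the stated conclusion.

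The main obstacle is this compatibility step. Unlike Kobayashi's setting, where \emph{any} small perturbation of $f \times g$ is allowed, here the perturbation is constrained to come from an isotopy of $M$ whose endpoints $\Phi_0 = \mathrm{id}$ and $\Phi_1 = \phi$ are prescribed. Verifying that this restricted class of deformations remains rich enough to achieve multi-jet transversality --- while preserving $g_0 = g$ and $g_1 = g \circ \phi$, and hence the spanning property established just before the lemma --- is the one piece of work specific to this statement; once it is in place, the enumeration of codimension-one events in the graphic reduces to standard singularity theory.
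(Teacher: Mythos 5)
The paper does not prove this lemma; it is cited directly as Lemma~25 of~\cite{me:stabs}, so there is no in-paper argument to compare your attempt against. Evaluated on its own terms, your sketch follows the standard Cerf-theoretic / parametrized multi-jet transversality strategy, and that is the right plan. You correctly separate the two codimension-one events: a momentary failure of stability of $f \times g_r$, which appears in the graphic as a single valence-six vertex, and a coincidence of two existing valence-two or valence-four vertices at the same horizontal or vertical level while $f \times g_r$ remains stable. You also correctly flag the one nonstandard technical point that distinguishes this statement from Kobayashi's pointwise result: the allowed perturbations are constrained to come from isotopies $\{\Phi_r\}$ of $M$ with prescribed endpoints $\Phi_0 = \mathrm{id}$ and $\Phi_1 = \phi$, so the argument must verify that this restricted class of one-parameter families still acts with enough freedom on jets away from the spines to achieve transversality, while preserving $g_0 = g$ and $g_1 = g\circ\phi$.

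What you have written is an outline rather than a proof. The submersion / partition-of-unity step is asserted rather than carried out; the classification of codimension-one multi-germs of maps from a $3$-manifold to a surface is invoked rather than verified; and the claim that at most two vertices coincide at a non-generic moment (rather than three or more, or a vertex coincidence simultaneous with a stability failure) should be tied to an explicit codimension count showing those events are codimension two and hence avoidable. None of these is a fatal gap, and your decomposition matches the conclusion of the lemma as stated, but a complete proof would need to fill them in.
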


Away from the finitely many non-generic points, the sweep-out $g_r$ either spans or splits $f$.  We will consider two cases:  When every $g_r$ spans $f$ and when some $g_r$ fails to span $f$.

\begin{Lem}
\label{allspanninglem}
If every generic $g_r$ spans $f$ then the restriction of $\phi$ to $\Sigma$ is isotopy trivial on $\Sigma$.
\end{Lem}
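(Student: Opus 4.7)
The plan is to use the spanning condition at every parameter $r$ to produce a continuous path of self-diffeomorphisms of $\Sigma$ starting at the identity and ending at $\phi|_\Sigma$. Such a path directly exhibits $\phi|_\Sigma$ as isotopy trivial on $\Sigma$, which is precisely what needs to be shown.

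For each generic $r$, Definition \ref{ffdef} supplies a horizontal slice $\{s(r)\} \times [-1,1]$ meeting the interiors of both $R_a^{(r)}$ and $R_b^{(r)}$ in the graphic of $f \times g_r$. I would fix $s(r)$ canonically (for instance, as the midpoint of the $s$-interval on which a spanning arc exists), so that the associated level surface $\Sigma^r := g_r^{-1}(s(r)) = \Phi_r^{-1}(\Sigma'_{s(r)})$ is flanked by an $f$-level mostly below it and another mostly above. The mostly-above/mostly-below data let me invoke the matching construction of \cite{me:stabs}: $\Sigma^r$ and the fixed level $\Sigma = f^{-1}(0)$ can be isotoped to coincide via a controlled ambient isotopy, producing a diffeomorphism $\rho_r : \Sigma^r \to \Sigma$ canonical up to isotopy in $\Sigma$. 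Combining $\rho_r$ with the restriction $\Phi_r|_\Sigma : \Sigma \to \Phi_r(\Sigma)$ (noting that $\Phi_r(\Sigma^r) = \Sigma'_{s(r)}$ identifies naturally with $\Sigma$) yields a self-map $\psi_r : \Sigma \to \Sigma$. At $r = 0$ the initial setup gives $\Sigma^0 = \Sigma$ with $\rho_0$ and $\Phi_0$ trivial, hence $\psi_0 = \mathrm{id}_\Sigma$. At $r = 1$, since $\phi(\Sigma) = \Sigma$ I would take $s(1) = s(0)$, so $\Sigma^1 = \phi^{-1}(\Sigma) = \Sigma$ and $\rho_1$ is trivial, giving $\psi_1 = \phi|_\Sigma$.

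The main obstacle is continuity of $r \mapsto \psi_r$ through the finitely many non-generic parameters supplied by Lemma \ref{isotopesweepslem}. At each such parameter the graphic changes only locally, near at most one valence-six vertex or two vertices of valence at most four, so by Lemma \ref{regionsboundedlem} the regions $R_a$ and $R_b$ are altered only in a small neighborhood and their closures remain disjoint on both sides. Spanning on both sides of the bad parameter then forces the distinguished horizontal slice $\{s(r)\} \times [-1,1]$ to persist across it (possibly with a controlled jump in $s(r)$), and the fact that $\rho_r$ is canonical only up to isotopy in $\Sigma$ absorbs that jump. A secondary delicate point is to verify that the matching construction of \cite{me:stabs} really produces $\rho_r$ continuously in the triple $(s(r), t_-(r), t_+(r))$ modulo isotopy of $\Sigma$; this is where the argument would have to lean most heavily on the constructive details of the earlier paper.
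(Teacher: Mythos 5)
Your strategy is essentially the same as the paper's: use the spanning condition at each generic $r$ to pick a $g_r$-level sandwiched between an $f$-level that is mostly above it and one that is mostly below it, produce from that data a canonical (up to isotopy) projection of that level onto $\Sigma = f^{-1}(0)$, and then track the resulting family continuously from the identity at $r=0$ to $\phi|_\Sigma$ at $r=1$. The composition bookkeeping involving $\Phi_r$ is a bit scrambled (you write $\Phi_r|_\Sigma$ where you mean $\Phi_r|_{\Sigma^r}$, and depending on how you unscramble it you get $\phi|_\Sigma$ or $\phi^{-1}|_\Sigma$, which is harmless), and both you and the paper are informal about continuity through the finitely many non-generic parameters; those points are not the concern.

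The real gap is in the construction of $\rho_r$, which you describe as isotoping $\Sigma^r$ and $\Sigma$ to coincide ``via a controlled ambient isotopy,'' deferring to an unnamed ``matching construction of \cite{me:stabs},'' and then flagging as a ``secondary delicate point.'' This is actually the heart of the lemma, and it is not an isotopy. The paper's construction is: compress $\Sigma'_s$ along innermost intersection loops with $\Sigma_a\cup\Sigma_b$ (which are trivial in $\Sigma_a$ or $\Sigma_b$ by the mostly-above/mostly-below conditions) to get a surface $S$ disjoint from both and separating them; observe that $S$ therefore has genus $\geq k$, while $\Sigma'_s$ has genus exactly $k$, so every compression must have been along a loop trivial in $\Sigma'_s$; conclude that the identity on the common part extends to a homeomorphism $\Sigma'_s\to S$ well defined up to isotopy; and finally use the $f$-product structure between $\Sigma_a$ and $\Sigma_b$ to project $S$ to $f^{-1}(0)$. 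The genus count that forces the compressions to be trivial is exactly what makes the projection a well-defined homeomorphism rather than a degree-reducing surgery, and it is what licenses the claim that the end result is canonical up to isotopy. Without spelling this out (or pointing to a lemma in \cite{me:stabs} that literally says this, which the present paper does not — it cites \cite{me:stabs} only for the genus lower bound on a separating surface), the existence of a canonical $\rho_r$ is an assertion, not a proof.
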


\begin{Lem}
\label{somesplitlem}
If some generic $g_r$ splits $f$ then the $d(\Sigma) \leq 3$.
\end{Lem}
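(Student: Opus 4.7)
The plan is to use the moment when the family $g_r = g \circ \Phi_r$ transitions from spanning to splitting to extract two compressing disks, one in each handlebody $H^\pm$, whose boundary curves on $\Sigma$ are close enough in the curve complex to force $d(\Sigma) \leq 3$.

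By Lemma \ref{isotopesweepslem} the spanning versus splitting dichotomy is well defined outside finitely many isotopy parameters, and near each such parameter the graphic changes by an event of very restricted type (at most two valence-two or valence-four vertices at the same height, or one valence-six vertex). Since $g_0$ spans $f$ and some $g_r$ splits it, there is a smallest non-generic parameter $r^*$ at which the transition occurs. Because Lemma \ref{regionsboundedlem} forces $\overline{R_a}$ and $\overline{R_b}$ to be disjoint, I expect the transition to take place at a single ``pinch'' vertex $(t^*, s^*)$ of the graphic: just before $r^*$ there is an open interval of $s$-values near $s^*$ whose horizontal arcs meet both $R_a$ and $R_b$, while just after, no horizontal arc meets both. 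I would first verify this local picture by enumerating the three possible non-generic events allowed by Lemma \ref{isotopesweepslem} and ruling out those that cannot separate the two regions.

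Working at parameter $r^*$ and height $s = s^*$, I would then select $t_a$ and $t_b$ with $(t_a, s^*) \in \overline{R_a}$ and $(t_b, s^*) \in \overline{R_b}$. From mostly-above at $t_a$, a subsurface-compression argument of the type used in \cite{me:stabs} should produce a compressing disk $D^- \subset H^-$ for $\Sigma'_{s^*}$ with boundary $\alpha$ on $\Sigma'_{s^*}$, together with an essential intersection loop $\beta \subset \Sigma_{t_a} \cap \Sigma'_{s^*}$ disjoint from $\alpha$. Dually, mostly-below at $t_b$ yields a compressing disk $D^+ \subset H^+$ bounded by a curve $\delta$ on $\Sigma'_{s^*}$, disjoint from an essential intersection loop $\gamma \subset \Sigma_{t_b} \cap \Sigma'_{s^*}$. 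The hypothesis that both $f$ and $g$ represent the same Heegaard splitting is crucial here: it identifies ``above'' in the $g$-coordinates with the handlebody $H^+$ and ``below'' with $H^-$, so that the extracted compressing disks actually lie in the intended handlebodies.

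The main obstacle will be showing that $\beta$ and $\gamma$ can be arranged to be disjoint as curves on $\Sigma'_{s^*}$. A priori they come from intersections with two different level surfaces $\Sigma_{t_a}$ and $\Sigma_{t_b}$ of $f$, so disjointness is not automatic. To handle this I would exploit the restricted local structure at the non-generic vertex: as $t$ ranges between $t_a$ and $t_b$, the intersection pattern $\Sigma_t \cap \Sigma'_{s^*}$ changes through a bounded number of saddle moves controlled by the single pinch vertex, and by selecting $\beta$ and $\gamma$ from compatible families of essential loops arising in this one-parameter family, I should be able to arrange $\beta$ and $\gamma$ to be disjoint or even equal on $\Sigma'_{s^*}$. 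Once this is done, the chain $\alpha, \beta, \gamma, \delta$ is a path of length at most three in $C(\Sigma'_{s^*}) \cong C(\Sigma)$ with $\alpha$ bounding a disk in $H^-$ and $\delta$ bounding a disk in $H^+$, yielding $d(\Sigma) \leq 3$.
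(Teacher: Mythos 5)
Your high-level outline matches the paper's: locate the transition parameter, extract a length-three path in the curve complex with endpoints bounding disks in $H^-$ and $H^+$, and conclude $d(\Sigma)\le 3$. But the step you flag as ``the main obstacle'' --- making the two middle curves $\beta$ and $\gamma$ disjoint --- is exactly where your proposed mechanism breaks, and this is the heart of the lemma.

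You suggest that between $t_a$ and $t_b$ the intersection pattern $\Sigma_t\cap\Sigma'_{s^*}$ changes by ``a bounded number of saddle moves controlled by the single pinch vertex.'' That is not true. Lemma~\ref{isotopesweepslem} only constrains the graphic \emph{locally} near the non-generic event (the vertices $(t_a,s_0)$ and $(t_b,s_0)$); it says nothing about how many edges of the graphic cross the horizontal line $s=s_0$ for $t$ strictly between $t_b$ and $t_a$. There can be arbitrarily many saddles of $f|_{g^{-1}(s_0)}$ in that range, so tracking a ``compatible family'' of essential loops across them is not something you can control, and the disjointness of $\beta$ and $\gamma$ does not follow. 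The paper sidesteps this entirely: it observes that the subsurface $g^{-1}(s_0)\cap f^{-1}\bigl((t_b+\epsilon,\,t_a-\epsilon)\bigr)$ admits a projection to $\Sigma$ that extends to a homotopy equivalence, by the same argument as in Lemma~\ref{allspanninglem}. Disjoint essential loops in that subsurface therefore project to disjoint isotopy classes in $\Sigma$, and the level loops of $f|_{g^{-1}(s_0)}$ at heights $t_a-\epsilon$ and $t_b+\epsilon$ are automatically disjoint in the subsurface since they sit at different $f$-levels. That projection argument, not any control on saddles, is what delivers the middle edge of the path; without it the proof does not go through.

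A secondary issue: producing the two compressing disks is not a one-line ``subsurface-compression argument.'' The paper runs a careful Morse-theoretic analysis of $f$ restricted to nearby level surfaces $\Sigma'_{s_0\pm\delta}$, using the fact that $(t_a,s_0)$ is a vertex of the graphic to identify a saddle that pinches two parallel essential level loops to a trivial one, and then shows the resulting annulus is disjoint from a compressing disk of the appropriate handlebody. Your proposal gestures at this but does not supply the argument, and the disk you attribute to the $R_a$-side (claimed to lie in $H^-$) in fact lands, in the paper's setup, in $H^+$ (and symmetrically for $R_b$); the orientation of ``mostly above/below'' relative to which handlebody the compression lives in needs to be tracked, not assumed. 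These details are recoverable, but combined with the missing projection argument they leave a genuine gap.
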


We will prove Lemma~\ref{allspanninglem} in Section~\ref{projectsect} and prove Lemma~\ref{somesplitlem} in Section~\ref{curvesect}.

\section{Projections}
\label{projectsect}

\begin{proof}[Proof of Lemma~\ref{allspanninglem}]
For each $r$ such that $g_r$ spans $f$, we will define a projection from the surface $g^{-1}(0)$ onto $f^{-1}(0)$.

Because $g_r$ spans $f$, there is a value $s \in [-1,1]$ such that the arc $\{s\} \times [-1,1]$ passes through both regions $R_a$ and $R_b$.  Thus there are values $a$, $b$ such that $\Sigma_a$ is mostly above $\Sigma'_s$ and $\Sigma_b$ is mostly below $\Sigma'_s$.  Every loop of intersection between $\Sigma'_s$ and $\Sigma_a \cup \Sigma_b$ is trivial in $\Sigma_a$ or $\Sigma_b$, so we can compress along innermost loops until we have formed a surface $S$ disjoint from both $\Sigma_a$ and $\Sigma_b$.  

Because $\Sigma_a$ was mostly above $\Sigma'_s$ and $\Sigma_b$ is mostly below, the surface $S$ will separate $\Sigma_a$ from $\Sigma_b$.  Thus, as in~\cite{me:stabs} the surface $S$ has genus at least $k$, where $k$ is the genus of $\Sigma$.  Because $\Sigma'_s$ has genus $k$ and $S$ is formed from $\Sigma'_s$ by a series of compressions, the compressions must all be along loops that are trivial in $\Sigma'_s$.

The surface $S$ coincides with $\Sigma'_s$ in the complement of a neighborhood of the disks bounding these trivial compressions.  Any homeomorphism between the boundaries of two disks extends into the interiors of the disks, and the induced homeomorphism of the disks is uniquely determined up to isotopy.  Thus the identity map on the set where $S$ and $\Sigma'_s$ coincide extends to a homeomorphism from $\Sigma'_S$ onto $S$ and this homeomorphism is well defined up to isotopy.

The surface $S$ is contained in the surface-cross-interval between $\Sigma_a$ and $\Sigma_b$ so there is, up to isotopy, a canonical projection of $S$ onto $f^{-1}(0)$.  Composing this with the projection from $g^{-1}(0)$ onto $\Sigma'_s$ and the projection from $\Sigma'_s$ onto $S$, we find a projection from $g^{-1}(0)$ onto $f^{-1}(0)$ that is uniquely determined up to isotopy.

The projection map varies continuously for different values of $s$, as long as those values stay in the spanning range.  As long as $g_r$ is spanning, the projection also varies continuously with $r$, so it remains in the same isotopy class.  The initial projection map is the identity, so if $g_r$ spans $f$ for all generic values of $r$ then $\phi$ is the identity.
\end{proof}

\section{Curves}
\label{curvesect}

\begin{proof}[Proof of Lemma~\ref{somesplitlem}]
The sweep-outs $g_0$ and $g_1$ both span $f$, so if some $g_r$ splits $f$ then there is a non-generic value $r_0$ with generic values $r_-$, $r_+$, just before and after $r_0$, respectively, such that $g_{r_-}$ spans $f$ while $g_{r_+}$ splits $f$. 

Let $A_r$ be the projection of the set $R_a \subset [-1,1] \times [-1,1]$ for $f \times g_r$ onto the $t$-axis of the square.  Let $B_r$ be the projection of $R_b$ at time $r$.  For generic $r$, the sets $A_r$ and $B_r$ are of the form $(-1,s_a)$ and $(s_b, 1)$, respectively.  The endpoints $s_a$, $s_b$ are projections of vertices of the graphic.  The sweep-out  $g_r$ will span $f$ if and only if the sets $A_r$, $B_r$ have non-trivial intersection, i.e. when $s_a < s_b$.
\begin{figure}[htb]
  \begin{center}
  \includegraphics[width=3.5in]{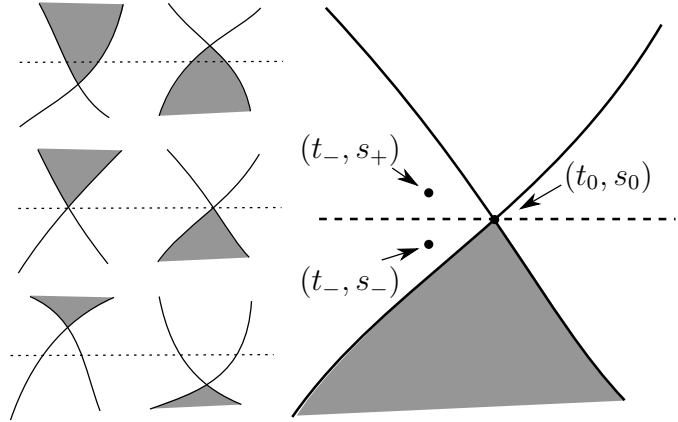}
  \put(-43,90){$(t_0, s_0)$}
  \put(-143,100){$(t_-, s_+)$}
  \put(-143,50){$(t_-, s_-)$}
  \caption{The graphic changes from spanning to splitting.}
  \label{grfig}
  \end{center}
\end{figure}

As the parameter $r$ passes from $r_-$ to $r_+$, the sets $A_r$, $B_r$ initially intersect in an open interval.  At time $r_-$, the intersection is still an open interval, but at time $r_+$, the intersection is empty.  Thus at time $r_0$, the two projections must be disjoint such that their complement contains a single point $s_0 = s_a = s_b$.  At any time $r$, the horizontal arcs $[-1,1] \times s_a$ and $[-1,1] \times s_b$ pass through vertices of the graphic, each in the closure of $R_a$ or $R_b$, respectively.  At time $r_0$, the arc $[-1,1] \times s_0$ intersects two vertices, one of which is in the closure of $R_a$ and the other in the closure of $R_b$.  The pictures at times $r_-$, $r_0$ and $r_+$ are shown on the left side of Figure~\ref{grfig}.

Let $t_a$ and $t_b$ be values such that the two vertices of the graphic have coordinates $(t_a, s_0)$ and $(t_b, s_0)$.  A blowup of the point $(t_a, s_0)$ is shown on the right in Figure~\ref{grfig}.  Because $g$ initially spans $f$ positively, we must have that $t_b < t_a$.  By the same argument as in Lemma~\ref{allspanninglem}, the projection map of the subsurface $g^{-1}(s_0) \cap f^{-1}(t_b + \epsilon, t_a - \epsilon)$ can be extended to a homotopy equivalence between the two surfaces, so any two disjoint essential loops in this subsurface project to (isotopy clases of) disjoint loops in $\Sigma$.  This is true, in particular, for any essential loops in the level sets of $f|_{g^{-1}(s_0)}$ at levels $t_b + \epsilon$ and $t_a - \epsilon$.  We will consider below the level set at level $t_a - \epsilon$, but a symmetric argument applies to the level set at time $t_b + \epsilon$.

Choose $\epsilon > 0$, then choose $\delta > 0$ so that $(t_a - \epsilon, s_0 \pm \delta)$ are in the same component of the complement of the graphic as $(t_a - \epsilon, s_0)$.  Define $t_- = t_a - \epsilon$, $s_\pm = s_0 \pm \delta$ and define the functions $f_\pm$, $f_0$ on $\Sigma'_{s_\pm}$, $\Sigma'_{s_0}$ as the restrictions of $f$ to these surfaces.  Because the point $(t_-, s_-)$ is outside $R_a$ and $R_b$, the level set of $f_+$ at level $s_+$ contains an essential loop in $\Sigma_{s_-}$.  The point $(t_a, s_-)$, on the other hand, is in $R_b$ so every loop in the level set of $f_-$ at level $t_a$ is trivial in $\Sigma'_{s_-}$.  

There is a single critical point between levels $t_-$, $t_0$ in the Morse function $f_-$, so this critical point must be a saddle and as we pass from $t_-$ to $t_a$, a pair of parallel essential loops must be pinched together to produce a single trivial loop in $\Sigma'_{s_-}$.  Thus the image in $\Sigma'_{s_-}$ of $f_-^{-1}([t_-,1])$ consists of an annulus and a collection of disks, which are properly embedded in the handlebody $H^+_{t_-} = f^{-1}([t_-,1])$.  

Because the points $(t_-, s_+)$ and $(t_-, s_-)$ are in the same component of the complement of the graphic, the level sets of $f_+$ and $f_-$ at level $t_-$ are isotopic in $\Sigma_{t_-}$ and project to isotopic collections of curves in $\Sigma'$.  Thus in the Morse function $f_+$, the level loops at level $s_-$ bound an annulus and a collection of disks in $\Sigma'_{s_+}$ that are properly embedded in $H^+_{t_+}$ and isotopic to the annulus and disks defined by $\Sigma'_{s_-}$.  

Because no point $(t, s_+)$ with $t > t_a$ is in $R_a$ or $R_b$, every level set of $f_+$ at a level $t > t_a$ contains a loop that is essential in $\Sigma_t$.  If this essential loop bounds a disk in $\Sigma'_t$ then it bounds compressing disk in the handlebody $H^+_t$.  There is a saddle in $f_+$ that pinches the annulus to produce a disk.  This saddle either produces an essential disk, in which case the boundary of the annulus is disjoint from this disk, or it produces a trivial disk, in which case some other loop in the level set bounds a compressing disk in $H^+_t$.  In either case, the annulus is disjoint from a compressing disk and is isotopic to the original annulus whose boundary is contained in the level set of $f_0$ at times $t_-$.  

We have shown that an essential level loop of $f|_{g^{-1}(s_0)}$ at level $t_a - \epsilon$ is distance one from a compressing disk in $H^+$.  A symmetric argument shows that an essential level loop of $f|_{g^{-1}(s_0)}$ at level $t_b + \epsilon$ is distance one from a compressing disk in $H^-$.  As noted above, these level sets are disjoint from each other in $\Sigma$, so the distance of $\Sigma$ is at most 3.
\end{proof}

\section{Proof of the Main Theorem}
\label{mainthmsect}

\begin{proof}[Proof of Theorem~\ref{mainthm}]
Assume there is a non-trivial element $\phi$ of the kernel of the homomorphism $i : Mod(M, \Sigma) \rightarrow Mod(M)$.  By Lemma~\ref{isotopesweepslem}, we can realize $\phi$ by a continuous family of sweep-outs $\{g_r\}$ such that $f \times g_r$ is generic for all but finitely many values of $r$.  If $g_r$ spans $f$ for every generic value of $r$ then by Lemma~\ref{allspanninglem}, the restriction of $\phi$ to $\Sigma$ is isotopy trivial.  Because $\phi$ is a non-trivial element of $Mod(M, \Sigma)$, the restriction must be non-trivial, so there is generic value of $r$ where $g_r$ splits $f$.  By Lemma~\ref{somesplitlem}, this implies that the Hempel distance of $\Sigma$ is at most 3.

Thus if $d(\Sigma) > 3$ then the kernel of $i$ must be trivial, i.e. $i$ is an injection.  By Hempel~\cite{hempel} and Thompson~\cite{thompson}, if $d(\Sigma) > 2$ then $M$ is atoroidal and not Seifert fibered, so $M$ is hyperbolic by the geometrization theorem.  This implies that $Mod(M)$ is finite.  Since the image of $i$ is finite and its kernel is trivial, the domain $Mod(M, \Sigma)$ of $i$ must be finite, completing the proof.

Scharlemann and Tomova~\cite{st:dist} showed that if $d(\Sigma) > 2k$ (where $k$ is the genus of $\Sigma$) then every genus $k$ Heegaard splitting of $M$ is isotopic to $\Sigma$.  This implies that every automorphism of $M$ can be composed with an isotopy so that the automorphism takes $\Sigma$ onto itself.  Thus every element of $Mod(M)$ is represented by an element of $Mod(M, \Sigma)$ and $i$ is onto.  If $k \geq 2$ then $d(\Sigma) \geq 2k > 3$ so $i$ is also one-to-one, and thus $i$ is an isomorphism.
\end{proof}

\begin{proof}[Proof of Corollary~\ref{curvecomplexcoro}]
Any pair of handlbody sets in the curve complex determines a Heegaard splitting $(\Sigma, H^-, H^+)$ in some 3-manifold $M$ whose Hempel distance is (by definition) the distance in the curve complex between the two handlebody sets.  By~\cite{ivanov},~\cite{luo}, every isometry of the curve complex is induced by an automorphism of $\Sigma$.  If an automorphism preseves the handlebody sets then it will take any loop bounding a disk in $H^-$ or $H^+$ to the boundary of another disk in $H^-$ or $H^+$.  Thus the automorphism of $\Sigma$ extends to each of the handlebodies, and thus to all of $M$.  In other words, any isometry of the curve complex determines an element of $Mod(M, \Sigma)$.  By Theorem~\ref{mainthm}, if the handlebodies are distance greater than 3 apart, then $Mod(M, \Sigma)$ is finite, so the group of isometries preserving the two handlebody sets is finite.
\end{proof}

\bibliographystyle{amsplain}
\bibliography{hidismcg}

\end{document}